\newtheorem{Theorem}{Theorem}[section]
\newtheorem{Lemma}[Theorem]{Lemma}
\def\V{\mbox{Var}}
\def\R\re
\def\V{\bf V}
\def \re{{\mathbb R}}
\def \0{\lambda_{0}}
\begin{document}
\title[Isoperimetric profile comparisons and Yamabe constants]{Isoperimetric profile comparisons and Yamabe constants}

\author[J. Petean]{Jimmy Petean}\thanks{J. Petean is supported
by grant 106923-F of CONACYT}
 \address{CIMAT  \\
          A.P. 402, 36000 \\
          Guanajuato. Gto. \\
          M\'exico \\
           and Departamento de Matem\'{a}ticas, FCEyN \\
          Universidad de Buenos Aires, Argentina.}
\email{jimmy@cimat.mx}

\author[J. Ruiz]{Juan Miguel Ruiz}
 \address{CIMAT  \\
          A.P. 402, 36000 \\
          Guanajuato. Gto. \\
          M\'exico }
\email{miguel@cimat.mx}

\subjclass{53C21}

\date{}


\begin{abstract}  We estimate from below the isoperimetric profile of $S^2 \times \re^2$ and 
use this information to obtain lower bounds for the Yamabe constant of
$S^2 \times \re^2$. This provides a lower bound for the Yamabe invariants
of products $S^2 \times M^2$ for any closed Riemann surface $M$. Explicitly
we show that $Y(S^2 \times M^2) > (2/3) Y(S^4 )$.

\end{abstract}

\maketitle

\section{Introduction} 
Given a conformal class  $[g]$ of Riemannian metrics on a closed manifold $M^n$ the 
{\it Yamabe constant} of $[g]$ is defined as the infimum
of the (normalized) total scalar curvature functional restricted to $[g]$:

$$Y(M,[g])  = \inf_{h\in [g]} \ \frac{ \int_M  s_h \  dvol_h }{ Vol(M,h)^{\frac{n-2}{n}}},$$

\noindent
where $s_h$ and $dvol_h$ are the scalar curvature and volume element of $h$.

If we express metrics in the conformal class of $g$ as $f^{4/(n-2)} \ g$ then we obtain the expression 

$$Y(M,[g]) = \inf_f \frac{ a_n  \int_M {\| \nabla f \|}^2 dvol(g) + \int_M s_g f^2 dvol(g) }{ (\int_M f^p dvol(g))^{2/p} }
=  \inf_{f\in L_1^2 (M)}  Y_g (f).$$

\noindent
Here we let $p=p_n = 2n/(n-2)$ and we will call $Y_g$ the Yamabe functional (corresponding to $g$).

If $f$ is a critical point of $Y_g$ then the corresponding metric $f^{4/(n-2)} \ g$ has
constant scalar curvature.  
H. Yamabe  introduced these notions in \cite{Yamabe} and gave a proof
that $Y(M,[g])$ is always achieved. His proof contained a mistake which was corrected
in a series of steps N. Trudinger \cite{Trudinger}, T. Aubin \cite{Aubin} and R. Schoen 
\cite{Schoen}, proving in this way the existence of at least one metric of constant
scalar curvature  in $[g]$. 

Later on O. Kobayashi in \cite{Kobayashi} and R. Schoen in \cite{Schoen2} introduced what we will call 
the {\it Yamabe
invariant} of $M$, $Y(M)$, 
as the supremum of the Yamabe constants of all conformal classes of Riemannian
metrics on $M$:

$$Y(M) = \sup_{ \{ [g] \} } Y(M,[g]) .$$


By a local argument  T. Aubin showed in \cite{Aubin} that the Yamabe constant
of any conformal class of metric on any $n$-dimensional manifold is bounded above
by $Y(S^n ,[g^n_0 ] )$, where by $g^n_0$ we will denote from now on the round metric of
sectional curvature one on $S^n$. It follows that $Y(S^n) =Y(S^n , [g^n_0 ])$ and for 
any $n$-dimensional manifold $M$, $Y(M) \leq Y(S^n )$. 


In this article we will be concerned with the problem of finding lower bounds for the Yamabe
constants of particular conformal classes. 
If the Yamabe constant of a conformal class $[g]$ is non-positive one has a good lower bound
$Y(M,[g]) \geq \inf_M (s_g ) Vol(M,g)^{2/n}$, as pointed out by O. Kobayashi \cite{Kobayashi2}. 
There is no similar lower
bound when the Yamabe constant is positive, and this is one explanation why the positive case is 
much more difficult to study. 
For instance one can use Kobayashi's lower bound to prove that if $M^n$ is a closed
$n$-manifold and $\overline{M}$ is obtained by performing surgery on a sphere of dimension 
 $ k\leq n-3$ then $Y(\overline{M} ) \geq Y(M)$ \cite{Yun}.  Certain computations of the invariant 
can be deduced from this result, for instance in dimension 4 it implies that if $Y(M) \leq 0$ then
$Y(M\# (S^1\times S^3 ))=Y(M)$ \cite{Petean}.  But for the above reasons studying the behavior
of the invariant under surgery in the positive case becomes much more difficult and it is still
unknown if the surgery result holds as in the non-positive case. Recently B. Ammann, M. Dahl
and E. Humbert \cite{Ammann} proved that there is a positive constant $\lambda_{n,k}$, 
which depends only on $n$ and $k$, such that 
$Y(\overline{M} )\geq \min \{ Y(M), \lambda_{n,k} \}$.

There is a  good lower bound for the Yamabe constant of the conformal class of
a metric of positive Ricci curvature as proved by S. Ilias in \cite{Ilias}: if $Ricci(g) \geq k g$, then
$Y(M,[g])\geq (n-1) k Vol(M,g)^{2/n}$. To
obtain  this  lower bound S. Ilias  compares $Y_g (f)$ with $Y_{g^n_0} (f_* )$, where 
$f$ is any smooth positive function in $M$ and $f_*$ is the spherical
symmetrization of $f$ (as explained below).  The comparison of the $L^2$ and
$L^p$ norms of the functions is immediate  
and to compare the $L^2$-norms of the gradients one applies the coarea formula 
and the comparison of the isoperimetric
profiles given by the Levy-Gromov isoperimetric inequality. The same type of
argument works as soon as one has lower bounds for the isoperimetric profile and
the scalar curvature of a metric $g$, and this is the idea we will apply in this work.

We will denote by $I_{(M,g)} : (0,Vol(M,g))\rightarrow \re_{\geq 0}$ the isoperimetric profile of
$(M,g)$. Namely, for any $t>0$ we consider all the regions in $M$ of volume $t$ and let
$I_{(M,g)} (t)$ be the infimum of the volumes of their boundaries. If a region $U$ realizes
the infimum it is called an {\it isoperimetric region} and $\partial U$ is called an {\it isoperimetric 
hypersurface}.  For all manifolds appearing in this article isoperimetric regions are known
to exit and their boundaries are smooth hypersurfaces. Note that sometimes in the definition
of the isoperimetric profile there is a normalization by the volume of the manifold, but since
we will be interested in Riemannian manifolds with infinite volume this is not possible.

In  this article we will concentrate in obtaining a lower bound for  the 
Yamabe constant of $S^2 \times \re^2$. 
First we point out that for a non-compact  manifold $(W^n ,g)$ of  positive scalar curvature
we define its Yamabe constant by

$$Y(W,g) = \inf_{h\in L_1^2 (W)} \frac{a_n  \int_W  | \nabla h |^2 dvol(g) + \int_W s_g h^2 dvol(g) }{ (\int_W h^p dvol(g))^{2/p} }
=  \inf_{h\in L_1^2 (W)}  Y_g (h).$$

To apply the ideas mentioned above we need estimates for the isoperimetric profile. 
But the  isoperimetric regions  in  $S^k \times \re^n$ are not known when $n\geq 2$.
The isoperimetric profiles of the cylinders $S^k \times \re$ were described by
   by R. Pedroza in
\cite{Pedro}. In Section 2 we will use his results   and the Ros Product Theorem in \cite{Ros} to prove the
following comparison:

\begin{Theorem}  $I_{(S^2 \times \re^2 ,g^2_0 +dx^2)}  \geq \frac{2 \sqrt{\epsilon}}{12^{3/8}} I_{(S^4 , 2^{3/2}  3^{1/4}\epsilon g_0^4 )}$, where
$\epsilon = (1.047)^2$.

\end{Theorem}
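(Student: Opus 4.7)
The plan is to estimate $I_{S^2\times\re^2}$ from below in two stages, by viewing $S^2\times\re^2$ as the Riemannian product $(S^2\times\re)\times\re$ of total dimension $n=4$. First I would invoke Pedroza's description of $I_{S^2\times\re}$ from \cite{Pedro}: isoperimetric regions in the cylinder $S^2\times\re$ are geodesic balls for small volumes and slabs $S^2\times[a,b]$ for large volumes, so $I_{S^2\times\re}$ is an explicit piecewise function. From this I would extract a convenient comparison function $J\le I_{S^2\times\re}$ whose power $J^{n/(n-1)}=J^{4/3}$ is concave and vanishes at $0$; this concavity property is precisely the hypothesis required by the next step.

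Next I would apply the Ros Product Theorem \cite{Ros} to $(S^2\times\re)\times\re$. Under the concavity assumption on both factors (trivial for the $\re$ factor, since $I_\re\equiv 2$), Ros' theorem yields a lower bound for $I_{S^2\times\re^2}^{4/3}$ built from the profiles of the two factors. Because the $\re$ factor contributes trivially, this collapses to an explicit one-variable lower bound for $I_{S^2\times\re^2}(v)$ expressed in terms of Pedroza's profile.

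The final step is to compare the resulting bound with the profile of $S^4$ equipped with the rescaled metric $c\,g_0^4$, where $c=2^{3/2}3^{1/4}\epsilon$. Under a homothety by $c$, the $3$-volume of hypersurfaces multiplies by $c^{3/2}$ and the $4$-volume by $c^2$, so $I_{(S^4,c\,g_0^4)}(v)=c^{3/2}I_{(S^4,g_0^4)}(v/c^2)$, and the round profile $I_{(S^4,g_0^4)}$ is known explicitly in terms of geodesic-ball volumes. The task becomes a pointwise comparison between two explicit functions of $v$, and the constants $\epsilon=(1.047)^2$ and $2\sqrt\epsilon/12^{3/8}$ are calibrated so that the inequality is tight at the worst volume.

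The main obstacle will be this last pointwise verification across both regimes of Pedroza's description. The appearance of the very specific numerical value $1.047\approx \pi/3$ strongly suggests that the critical volume where the ratio $I_{(S^2\times\re^2,\,g_0^2+dx^2)}(v)/I_{(S^4,c\,g_0^4)}(v)$ is minimized is pinned down by an implicit transcendental equation obtained by matching the ball-regime formula for $S^2\times\re$ against the spherical-cap formula for $S^4$. Producing this match, verifying it in the slab regime as well, and ruling out a smaller ratio elsewhere, is where the real work lies; by contrast, assembling the Pedroza and Ros inputs should be essentially mechanical.
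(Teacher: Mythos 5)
Your outline has the right general ingredients (Pedrosa's cylinder formulas, the Ros Product Theorem, a scaling computation for the round profile, numerics at the end), but the middle step, as you describe it, does not work. The Ros Product Theorem is not a formula expressing $I_{M\times\re}$ in terms of $I_M$ under a concavity hypothesis on $I_M^{n/(n-1)}$; it is a symmetrization statement: if $I_M\geq I_{M'}$ where $M'$ is a \emph{model} manifold whose isoperimetric regions are nested (e.g.\ a round sphere), then $I_{M\times N}\geq I_{M'\times N}$. So you cannot ``collapse'' the product to a one-variable lower bound built from $I_{S^2\times\re}$ alone; you need an intermediate model. The paper's route is: (i) prove $I_{S^2\times\re}\geq I_{S^3(3)}$, where $S^3(3)=(S^3,2g_0^3)$ is normalized so that the two profiles have the same maximum $8\pi$ --- the delicate part being small volumes, where the profiles are tangent at $0$ and one must Taylor-expand Pedrosa's formulas; (ii) apply Ros to conclude $I_{S^2\times\re^2}\geq I_{S^3(3)\times\re}$, the latter profile being explicit again by Pedrosa; (iii) compare $I_{S^3(3)\times\re}$ with $\frac{2\sqrt{\epsilon}}{12^{3/8}}I_{S^4(12^{3/4}/(2\epsilon))}$. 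Without the model $S^3(3)$ your plan gives no access to the profile of the four-dimensional product.

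Second, the final ``pointwise comparison'' cannot hold for all volumes along these lines: $I_{S^3(3)\times\re}$ is eventually constant, equal to $2\,\mathrm{Vol}(S^3(3))=8\sqrt2\,\pi^2\approx 111.7$, whereas $\frac{2\sqrt{\epsilon}}{12^{3/8}}I_{S^4(c)}$ keeps growing up to half the volume of the sphere and reaches about $134$, so step (iii) is only valid for $v\lesssim 100$. For large volumes one needs a genuinely different lower bound on $I_{S^2\times\re^2}$: the paper uses Morgan's product estimate $I_{M\times N}\geq\frac{1}{\sqrt2}I_P$ (this, not Ros, is the result whose hypothesis is concavity of the factor profiles) to get $I_{S^2\times\re^2}(v)\geq\frac{4\pi}{\sqrt2}\sqrt v$ for $v\geq16$, and then Bayle's concavity of $I_{S^2\times\re^2}$ (valid since $\mathrm{Ricci}\geq0$) to interpolate a chord between the two regimes that still dominates the rescaled $S^4$ profile. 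Your proposal should incorporate both the intermediate model and this large-volume argument. (A minor point: $\epsilon$ is calibrated to optimize the Yamabe bound in Section 3, balancing the scalar curvature of $S^4(12^{3/4}/(2\epsilon))$ against the size of its profile, rather than by a tangency condition inside the isoperimetric comparison alone.)
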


Given a non-negative smooth function $f \in L^1_2 (S^2 \times \re^2 )$ we will
build in Section 3 symmetrizations $f_* $ which are nonincreasing radial
function on the sphere $S^4$ and by using the previous theorem and the
ideas mentioned above we will prove:

\begin{Theorem} $Y(S^2 \times \re^2 , [g_0 +dx^2 ]) \geq \frac{\sqrt{2}\epsilon}{3^{3/4}} Y(S^4 )$.
\end{Theorem}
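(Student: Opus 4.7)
The plan is to adapt the Ilias symmetrization argument, using Theorem~1.1 as the isoperimetric input in place of the L\'evy--Gromov inequality. Given a nonnegative smooth $f \in L^2_1(S^2\times\re^2)$ with $\Vol_g\{f>0\} \leq \Vol_{\tilde g}(S^4)$, where $\tilde g := 2^{3/2}3^{1/4}\epsilon\, g^4_0$, construct a nonincreasing radial rearrangement $f_*$ on $(S^4,\tilde g)$ by the equimeasurability condition $\Vol_{\tilde g}\{f_*>t\} = \Vol_g\{f>t\}$ for every $t>0$. Since the Yamabe constant is conformally (and hence scale-) invariant, $Y(S^4,[\tilde g]) = Y(S^4)$, so it suffices to establish $Y_g(f)\geq k\, Y_{\tilde g}(f_*)$ with $k := \tfrac{\sqrt 2\,\epsilon}{3^{3/4}}$; infimizing over admissible $f$ then yields the theorem.

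Three comparisons combine to this end. First, by equimeasurability $\int f^q\,dv_g = \int f_*^q\,dv_{\tilde g}$ for every $q>0$, so the $L^2$ and $L^{p_4}=L^4$ norms of $f$ and $f_*$ coincide. Second, the scalar curvatures are the constants $s_g\equiv 2$ on $S^2\times\re^2$ and $s_{\tilde g}\equiv 12/(2^{3/2}3^{1/4}\epsilon) = \tfrac{\sqrt 2\cdot 3^{3/4}}{\epsilon}$, which satisfy the identity $k\cdot s_{\tilde g} = 2$; hence $\int s_g f^2\,dv_g = k\int s_{\tilde g} f_*^2\,dv_{\tilde g}$. Third, for the gradient term set $\mu(t) = \Vol_g\{f>t\}$; the coarea formula together with Cauchy--Schwarz applied to $\int_{\{f=t\}} |\nabla f|^{-1/2}\cdot|\nabla f|^{1/2}\,d\sigma$ on each regular level set yields
\[
\int |\nabla f|^2\,dv_g \;\geq\; \int_0^\infty \frac{I_{(S^2\times\re^2,g)}(\mu(t))^2}{-\mu'(t)}\,dt,
\]
while for the radial $f_*$ the analogous computation is an equality with $I_{(S^4,\tilde g)}$ in place of the profile, since the level sets of $f_*$ are geodesic spheres and thus isoperimetric on the round sphere. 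Applying Theorem~1.1 pointwise in $V$ and using $12^{3/4}=2^{3/2}\cdot 3^{3/4}$,
\[
\int|\nabla f|^2\,dv_g \;\geq\; \Big(\tfrac{2\sqrt\epsilon}{12^{3/8}}\Big)^{\!2}\!\int|\nabla f_*|^2\,dv_{\tilde g} \;=\; \tfrac{4\epsilon}{12^{3/4}}\int|\nabla f_*|^2\,dv_{\tilde g} \;=\; k\int|\nabla f_*|^2\,dv_{\tilde g}.
\]

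Combining the three comparisons, the numerator of $Y_g(f)$ is at least $k$ times the numerator of $Y_{\tilde g}(f_*)$ while the denominators agree, giving $Y_g(f)\geq k\,Y_{\tilde g}(f_*)\geq k\,Y(S^4)$. The main technical obstacle is the gradient inequality: justifying coarea, the a.e.\ differentiability of $\mu$ via Sard's theorem, and the Cauchy--Schwarz step on regular level sets --- standard in this circle of ideas but requiring attention to the measure-theoretic details. A secondary concern is that $f_*$ is only defined when $\Vol_g\{f>0\}\leq\Vol_{\tilde g}(S^4)$; since smooth compactly supported functions are dense in $L^2_1$ and the Yamabe infimum can be approached along a sequence of test functions with small enough support (or by rescaling $\tilde g$, which leaves $Y(S^4,[\tilde g])$ unchanged), this restriction does not affect the final bound. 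Note how sharply the constants match: the factor $(2\sqrt\epsilon/12^{3/8})^2$ from Theorem~1.1 and the ratio $s_g/s_{\tilde g}$ are forced to be equal to the same $k$, which is what allows a single constant to pass through both the gradient and scalar-curvature terms.
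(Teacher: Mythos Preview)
Your argument for the case $\Vol_g\{f>0\}\leq\Vol_{\tilde g}(S^4)$ is correct and matches the paper's approach exactly (coarea plus Cauchy--Schwarz/H\"older, plus the observation that the constant $k$ simultaneously governs the gradient and scalar-curvature comparisons). The gap is in your handling of the complementary case. Neither of your two proposed fixes works. First, the claim that the infimum of $Y_g$ can be approached by test functions whose support has volume at most $\Vol_{\tilde g}(S^4)$ is unjustified and almost certainly false here: functions supported in small balls see only the local Euclidean geometry and give Yamabe quotient close to $Y(S^4)$, whereas $Y(S^2\times\re^2,[g_0+dx^2])$ is strictly smaller. Second, rescaling $\tilde g$ to enlarge the target sphere does keep $Y(S^4,[\tilde g])$ fixed, but Theorem~1.1 is proved for one specific $\tilde g$; changing the radius changes both the isoperimetric comparison constant and $s_{\tilde g}$, and the paper explicitly notes that $\lambda_2=2^{3/2}3^{1/4}\epsilon$ was chosen to optimize the resulting bound, so any other choice gives a weaker inequality and destroys the exact matching $k\cdot s_{\tilde g}=s_g$ that you correctly highlighted.

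The paper resolves the large-support case by a slicing argument: choose levels $t_0=\max f>t_1>\cdots>t_N=0$ so that each shell $f^{-1}(t_i,t_{i-1})$ has volume at most $\Vol_{\tilde g}(S^4)$, symmetrize the restriction $f_i$ of $f$ to each shell separately to get ${f_i}_*$ on $(S^4,\tilde g)$, and observe that Theorem~1.1 still controls $\Vol(f_i^{-1}(t))$ versus $\Vol({f_i}_*^{-1}(t))$ because the profile $I_{S^2\times\re^2}$ is increasing (by concavity, via Bayle). Then $\|f\|_q^q=\sum_i\|{f_i}_*\|_q^q$, the gradient splits as a sum with the factor $k$ on each piece, and one finishes with the elementary inequality $\sum a_i^{1/2}\geq(\sum a_i)^{1/2}$ applied to $a_i=\int {f_i}_*^4$. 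This is the missing ingredient you need.
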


Note that 

$$\frac{\sqrt{2}\epsilon}{3^{3/4}}  =\left(  \frac{2 \sqrt{\epsilon}}{12^{3/8}} \right)^2 \approx 0.68 .$$

Similar ideas can be applied to the products $S^k \times \re^n$, for any $k$ and $n$. But
since some non-trivial numerical computation must be carried on it seemed better 
to focus in the particular case of $S^2 \times \re^2$. 

Now, since for any Riemannian metric $g$ on any 2-dimensional closed manifold
$M$ it is proven in \cite[Theorem 1.1]{Akutagawa} that

$$\lim_{r\rightarrow \infty} Y(S^2 \times M, [g_0 + rg] )=Y(S^2 \times \re^2 , [g_0 +dx^2 ]) ,$$

\noindent
we obtain as a corollary that

\begin{Theorem} If $M$ is a  closed 2-dimensional manifold then $Y(S^2 \times M)\geq
\frac{\sqrt{2} \epsilon }{3^{3/4}} Y(S^4 )$.
\end{Theorem}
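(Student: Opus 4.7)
The plan is to combine Theorem 1.2 with the limit result of Akutagawa cited immediately before the statement, using the fact that the Yamabe invariant is defined as a supremum over all conformal classes. Fix an arbitrary Riemannian metric $g$ on the closed surface $M$. For each $r>0$ the product $S^2\times M$ carries the conformal class $[g_0+rg]$, and by definition of the Yamabe invariant we have
$$Y(S^2\times M)\;\geq\; Y\!\left(S^2\times M,\,[g_0+rg]\right)$$
for every $r>0$. Taking the limsup as $r\to\infty$ on the right-hand side and invoking the cited Theorem~1.1 of \cite{Akutagawa} yields
$$Y(S^2\times M)\;\geq\;\lim_{r\to\infty}Y\!\left(S^2\times M,\,[g_0+rg]\right)\;=\;Y\!\left(S^2\times\re^2,\,[g_0+dx^2]\right).$$

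With that reduction in hand, the bound claimed in Theorem~1.3 is exactly the content of Theorem~1.2 applied to the right-hand side. Substituting $Y(S^2\times\re^2,[g_0+dx^2])\geq \frac{\sqrt{2}\,\epsilon}{3^{3/4}}\,Y(S^4)$ into the previous inequality gives the desired lower bound.

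No step of this argument is expected to be an obstacle: the hard analytic work has already been carried out in Theorem~1.1 of Akutagawa--Neves--Petean (the non-trivial collapsing limit for Yamabe constants of Riemannian products when one factor is shrunk) and in Theorem~1.2 above (the isoperimetric-profile symmetrization argument for $S^2\times\re^2$). The only point that deserves a brief sanity check is that Theorem~1.3 does not depend on the choice of background metric $g$ on $M$: this is automatic, since the right-hand side of the limit is independent of $g$, so we may as well use any convenient metric (for instance, a metric of constant curvature) to realize the approximating family. Thus the proof is essentially a one-line consequence of the two results placed in sequence.
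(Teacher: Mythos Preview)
Your proof is correct and follows exactly the paper's approach: the paper presents Theorem~1.3 as an immediate corollary of Theorem~1.2 via the limit formula $\lim_{r\to\infty} Y(S^2\times M,[g_0+rg])=Y(S^2\times\re^2,[g_0+dx^2])$ from \cite{Akutagawa}, just as you do. One minor correction: the cited reference \cite{Akutagawa} is Akutagawa--Florit--Petean, not Akutagawa--Neves--Petean.
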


As far as we know this is the best result known about the Yamabe invariants of $S^2
\times M^2$ when $M$ is a Riemann surface of genus $g\geq 1$. $S^2 \times M$ admits
metrics of positive scalar curvature, and so it is known that $Y(S^2 \times M) \in (0,Y(S^4 )]$.
In \cite{Petean2} it is proved that $Y(S^2 \times M) >0.0006 Y(S^4 ) $
(see \cite[Theorem 2]{Petean2} for the explicit constant). But no other estimate is known
to the best of the author's knowledge. In the case of $S^2 \times S^2$ the product Einstein metric, $g_E$,
is a Yamabe metric (since it is the only unit volume metric of constant scalar curvature
in its conformal class by the classical theorem of M. Obata \cite{Obata}). Then one knows that
$Y(S^2 \times S^2 ) \geq Y(S^2 \times S^2 , [g_E]) = 16 \pi \approx 0.816 Y(S^4 )$ and it was
proved by C. B\"{o}hm, M. Wang  and W. Ziller in \cite{Bohm} that the inequality is strict. So
in particular the last theorem does not give any new information for this case. We also point
out that there are a few computations where the invariant falls into the interval
$(0,Y(S^n ))$, only in dimensions 3 and 4. In dimension 3 it was proved by H. Bray and
A. Neves \cite{Bray} that the conformal class of the constant curvature metric 
on the projective space ${\bf RP^3}$ achieves
the Yamabe invariant and so $Y({\bf RP^3}) = 2^{-2/3} Y(S^3 ) \equiv 0.63 \  Y(S^3 )$ and it
was later shown by K. Akutagawa and A. Neves that this value is also the 
Yamabe invariant of the connected sum of ${\bf RP^3}$ with any number of copies of
$S^2 \times S^1$ \cite{Neves}. In dimension 4 C. LeBrun proved that  the conformal class of the
Fubini-Study metric on ${\bf CP^2}$ realizes the Yamabe invariant of 
${\bf CP^2}$ and so $Y({\bf CP^2}) = 12 \sqrt{2} \pi \approx 0.87 \ Y(S^4 )$ and  
later M. Gursky and C. LeBrun showed that this is also the value of
the Yamabe invariant of the connected sum
of ${\bf CP^2}$ with any number of copies of $S^3 \times S^1$ \cite{Gursky}.


\section{Estimating the isoperimetric profile of $S^2 \times \re^2$}

In this section we will prove Theorem 1.1. 
First
we discuss a comparison between the isoperimetric profiles of $(S^2\times \mathbf{R},g^2_0+dt^2)$ and $(S^3,\lambda_1 g^3_0 )$, and between those of $(S^3\times \mathbf{R},\lambda_1 g^3_0 +dt^2)$ and $(S^4,\lambda_2 g^4_0)$, where $g^n_0$ is the round metric for $S^n$, and $\lambda_1=2$, $\lambda_2= (2)^{3/2}(3)^{(1/4)} \epsilon$,
(where $\epsilon =(1.047)^2$). 
We picked $\lambda_1 =2$  to match the maximums of the isoperimetric profiles  $I_{(S^3,\lambda_1 g^3_0)}$ and $I_{(S^2\times \mathbf{R},g_0^2+dt^2)}$ and we will
prove in Subsection 2.1 that  $I(S^2\times \re ,g_2+dt^2) \geq I_{(S^3,2 g^3_0)} $. To obtain our lower bound on the Yamabe constant of $S^2 \times \re^2$
we will need to compare the isoperimetric profile of $S^2 \times \re^2$ with one of a 4-sphere $\lambda_2 g^4_0$. As we increase $\lambda_2$ the scalar
curvature decreases and this improves the lower bound. But the isoperimetric profile also increases and this makes our lower bound smaller. The value
$\lambda_2= (2)^{3/2}(3)^{(1/4)} \epsilon$ is the one for which one obtains the best lower bound for the Yamabe constant. This should become clear in Section 3.

We will denote by $S^n (k)$ the round n-sphere of scalar curvature $k$. Note that
according to this notation $(S^3, 2 g_3 )=S^3 (3)$ and $(S^4 , (2)^{3/2}(3)^{(1/4)} \epsilon g^4_0 ) = S^4 (12^{3/4}/(2\epsilon ))$.

The isoperimetric profile for the spherical cylinder $(S^n\times \re ,g^n_0+dt^2)$, $n\geq 2$, was recently studied by R. Pedrosa \cite{Pedro}. 
He shows that isoperimetric regions  are either a cylindrical section or congruent to a ball type region and gives explicit formulae for the volumes 
and areas of the isoperimetric regions and their boundaries. The cylindrical section  $(S^n\times  (a,b)$ has volume $(b-a) V_n$, where
$V_n =Vol(S^n g^n_0 )$, and its boundary has area $2V_n$. Let us recall the values  of $V_n$ that we will use: $V_2 =4\pi$ and
$V_3 = 2 \pi^2$.
 
The ball type regions $\Omega^n_h$ are balls whose boundary 
is a smooth sphere of constant mean curvature $h$. The sections of  $\Omega^n_h$, namely
$\Omega^n_h \cap (S^n \times \{ a \}) $, are geodesic balls in $S^n$ centered at some fixed point. If we let $\eta \in (0,\pi)$
be the maximum of  the radius of those balls then  $h=h_{n-1}(\eta)=\frac{(Sin(\eta))^{n-1}}{\int_0^{\eta}(Sin(s))^{n-1}ds}$.
 The formulas for the volumes of  $\Omega_h$  and its boundary obtained by Pedroza are

			\begin{equation}
					\label{area}
					Vol(\partial \Omega^n_h)= 2  V_{n-1} \int_0^{\eta} \frac{(Sin(y))^{n-1}}{\sqrt{1-u_{n-1}(\eta,y)^2}} dy,
			\end{equation}

			\begin{equation}
				\label{volume}
				Vol(\Omega^n_h)= 2 V_{n-1} \int_0^{\eta} \frac{\int_0^y(Sin(s))^{n-1}ds \  \ u_{n-1}(\eta,y)}{\sqrt{1-u_{n-1}(\eta,y)^2}} dy,
			\end{equation}

\noindent where  \[u_{n-1}(\eta,y)=\frac{(Sin(\eta))^{n-1}/\int_0^{\eta}(Sin(s))^{n-1}ds}{(Sin(y))^{n-1}/\int_0^y(Sin(s))^{n-1}ds}.\]

\noindent Moreover, for $n=2$, one obtains the formulas
		
		\begin{equation}
			\label{area2}
			Vol(\partial \Omega^2_h)=  4 \pi \left( \frac{2}{1+h^2} + \frac{h^2}{(1+h^2)^{3/2}} log\frac{\sqrt{1+h^2}+1}{\sqrt{1+h^2}-1}\right),
		\end{equation}

	\begin{equation}
			\label{volume2}
			Vol(\Omega^2_h)= 4 \pi h \left( \frac{2+h^2}{(1+h^2)^{3/2}}  log\frac{\sqrt{1+h^2}+1}{\sqrt{1+h^2}-1}-\frac{2}{1+h^2}\right),
	\end{equation}

\noindent	
with $h=h_1(\eta)=\frac{Sin(\eta)}{\int_0^{\eta}Sin(s)ds}$.

$Vol(\partial \Omega^2_h)$ is an increasing function of $\eta$ until it reaches the value $8 \pi =2V_2$. This value is achieved for $\eta_0
\approx 1.97$, $h_0 \approx 0.66$. Then for
$\eta \leq \eta_0$ we have $I _{S^2\times \mathbf{R} } (Vol(\Omega^2_h) ) = Vol(\partial \Omega^2_h)$. And for any $v>Vol(\Omega^2_{h_0})$ we have $I(v)=8\pi$ (and
the isoperimetric region is the corresponding spherical cylinder). 
As we mentioned before we picked $\lambda_1$ so that the maximum of $I_{S^3 (3)}$ is $8\pi$. Therefore to make the comparison of the isoperimetric profiles
 we only need to consider the $ \Omega^2_h$ regions and volumes $v \leq Vol(\Omega^2_{h_0})$.

\subsection{Proof that $I_{S^2\times \re ,g_2+dt^2} \geq I_{S^3 (3)}$ .}

The isoperimetric regions in $(S^3,2g^3_0 )$ are geodesic balls, and we have the formula:

$$I_{S^3 (3)}  (2^{5/2} \pi (r-\sin (r)  \cos (r) ) = 8\pi \sin^2 (r) .$$

And as we mentioned above 

$$I _{S^2\times \mathbf{R} } (Vol(\Omega^2_h) ) = Vol(\partial \Omega^2_h) .$$

These formulas are explicit and the only problem to prove the desired inequalities is that one cannot
find the inverse of the functions which give the volumes of the regions. Nevertheless it is very easy 
(and we hope this is clear to the reader) to prove numerically the desired inequality for values of the volume
away from 0.   The problem at 0 is that the isoperimetric profiles are very close and have a singularity at 0
(the derivatives of the functions which give the volume vanish at 0). Since the scalar curvature of $S^2 \times \re^2$
is smaller than $3$ it is know from a result of O. Druet \cite{Druet} that the desired inequality holds for
small values of the volume, but there is no lower bound for how small the volume has to be. We will prove explicitly
that $I_{(S^2\times \re ,g_2+dt^2)} (t) \geq I_{(S^3 (3))}  (t)$ for $t<0.2$ by going through the numerical estimates.
This is of course a very elementary and probably uninteresting job; the reader might want to skip this part and go
directly to the end of this subsection.

In order to prove the required inequality for small values of the volume we need to look at the Taylor expansion
of the formulas for the volumes of the isoperimetric regions and their boundaries. Let $x=1/h$ and call 
$A(x)=Vol(\partial \Omega^2_{1/x})$ and $V(x)=Vol(\Omega^2_{1/x})$. Then we have (by a explicit computation)

$A(x)= 16 \pi x^2 -(64/3)\pi x^4 +(128/5) \pi x^6 -(1024/35) \pi x^8 +o(x^9),$

$V(x)= (32/3) \pi x^3 -(256/15)\pi x^5 +(768/35) \pi x^7 -(8192/315)\pi x^9 + o(x^{10}).$

Moreover, one can easily estimate that for $0<x<0.2$, $A^{(9)}(x)>0$ and $V^{(10)}(x)<3 \times 10^8$ and therefore we have 

$$A(x)>16 \pi x^2 -(64/3)\pi x^4 +(128/5) \pi x^6 -(1024/35) \pi x^8,$$

\noindent and

$$|V(x)-((32/3) \pi x^3 -(256/15)\pi x^5 +(768/35) \pi x^7 -(8192/315) x^9 )|
< 83 x^{10}.$$

The isoperimetric regions in $S^3 (3)$ are geodesic balls, and we have the formula:

$$I_{S^3 (3)}  (2^{5/2} \pi (r-\sin (r)  \cos (r) ) = 8\pi \sin^2 (r) .$$

Then we let 

$$v(r)= 2^{5/2} \pi (r-\sin (r)  \cos (r) )$$

\noindent
and 

$$a(r)= 8\pi \sin^2 (r) .$$

We see 

$$v(r)= 2^{7/2}/3 \pi r^3 -(2^{7/2}/15)\pi r^5 + (16/315) \sqrt{2} \pi r^7 -(8/2835) \sqrt{2} \pi r^9+o(r^{10}),$$

\noindent
and $a(r)= 8 \pi r^2 -(8/3)\pi r^4 +(16/45)\pi r^6 -(8/315)\pi r^8 + o(r^9).$

Now set $r=\sqrt{2}x - (2/5)\sqrt{2}x^3+(11/10)x^5$. Then

$v(x)= \frac{32}{3} \pi x^3 -\frac{256}{15}\pi x^5 +(\frac{22784}{1575}+\frac{88}{5} \sqrt{2}) \pi x^7-(\frac{699904}{70875}+
\frac{1936}{75} \sqrt{2})\pi x^9+ o(x^{10})$

$a(x)= 16 \pi x^2 -\frac{352}{15}\pi x^4 + (\frac{5056}{225}+\frac{88}{5} \sqrt{2})\pi x^6- (\frac{5504}{315}+\frac{2288}{75} \sqrt{2}) \pi x^8  +o(x^9) $.

Moreover, one can easily check that for $0<x<0.2$, $a^{(9)}(x)<7 \times 10^7$ and $v^{(10)}(x)>0$. Therefore we have 

\noindent
$|a(x)-(16 \pi x^2 -\frac{352}{15}\pi x^4 + ( \frac{5056}{225}+\frac{88}{5} \sqrt{2} ) \pi x^6- (\frac{5504}{315}+\frac{2288}{75} \sqrt{2}) \pi x^8 )|<7 \frac{10^7 }{9!}  x^9 < 193 x^9$

\noindent
 and 
 
 \noindent
 $v(x)> \frac{32}{3} \pi x^3 -\frac{256}{15}\pi x^5 +(\frac{22784}{1575}+\frac{88}{5} \sqrt{2}) \pi x^7-(\frac{699904}{70875}+\frac{1936}{75} \sqrt{2})\pi x^9 .$

It follows that for $0<x<0.2$ we have 

$$A(x)-a(x)> \frac{32}{15} \pi x^4 +\left( \frac{704}{225}-\frac{88}{5} \sqrt{2} \right) \pi x^6+\left( -\frac{3712}{315}+\frac{2288}{75} \sqrt{2}\right) \pi x^8- 193 x^9$$
$$>\left(\frac{32}{15} \pi +\left( \frac{704}{225}-\frac{88}{5} \sqrt{2}\right) \pi (0.2)^2- 193 (0.2)^5\right) x^4 >3 x^4>0,$$
and

$$v(x)-V(x)>\left( -\frac{11776}{1575}+ \frac{88}{5} \sqrt{2}\right) \pi x^7+\left( \frac{163328}{10125}-\frac{1936}{75} \sqrt{2}\right) \pi x^9 - 83 x^{10}$$
$$>\left( \left( \frac{-11776}{1575}+ \frac{88}{5}\sqrt{2}\right) \pi +\left( \frac{163328}{10125}-\frac{1936}{75} \sqrt{2}\right) \pi (0.2)^2 - 83 (0.2)^3 \right)x^7$$
$$>51 x^7>0.$$

Now, for $0<x<0.2$ we have $I_{S^2\times \re}\left(V(x)\right)= A(x)>a(x)=I_{S^3(3)}\left(v(x)\right)> I_{S^3(3)}\left(V(x)\right)$ (since $I_{S^3(3)}$ is increasing). And so, we have that if $0<x<0.2$ then  $I_{S^2\times \re}\left(V(x)\right)> I_{S^3(3)}\left(V(x)\right)$. Since $V(0.2)>0.25$ we have proved that 
$I_{S^2\times \re} (t)> I_{S^3(3)} (t)$ for $t<0.25$.

Once we dealt with sufficiently small values of the volume checking the desired inequality for the isoperimetric functions is a completely standard numerical argument. Instead of going through it, we will simply provide with the graphics for  $x > 0.5$  (Figure \ref{fig:S2}a), for $0.3 < x < 0.5$ (Figure \ref{fig:S2}b) and for  $0.2 < x<0.3$ (Figure \ref{fig:S2}c).

\begin{figure}[h!bp]
\begin{center}
\subfigure[$x > 0.5$.]{
\includegraphics[scale=0.30]{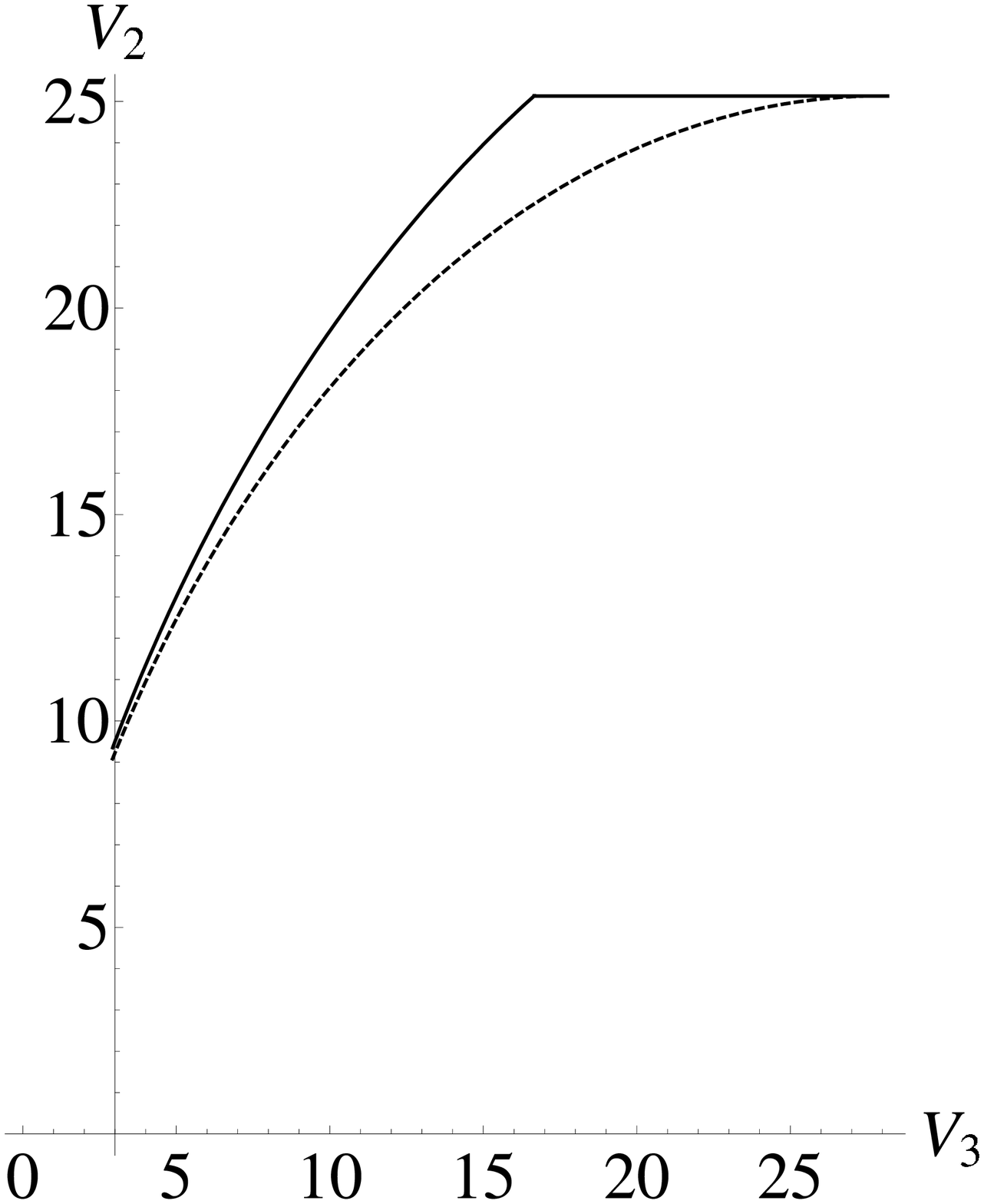}}
\subfigure[$0.3 < x < 0.5$ .]{
\includegraphics[scale=0.30]{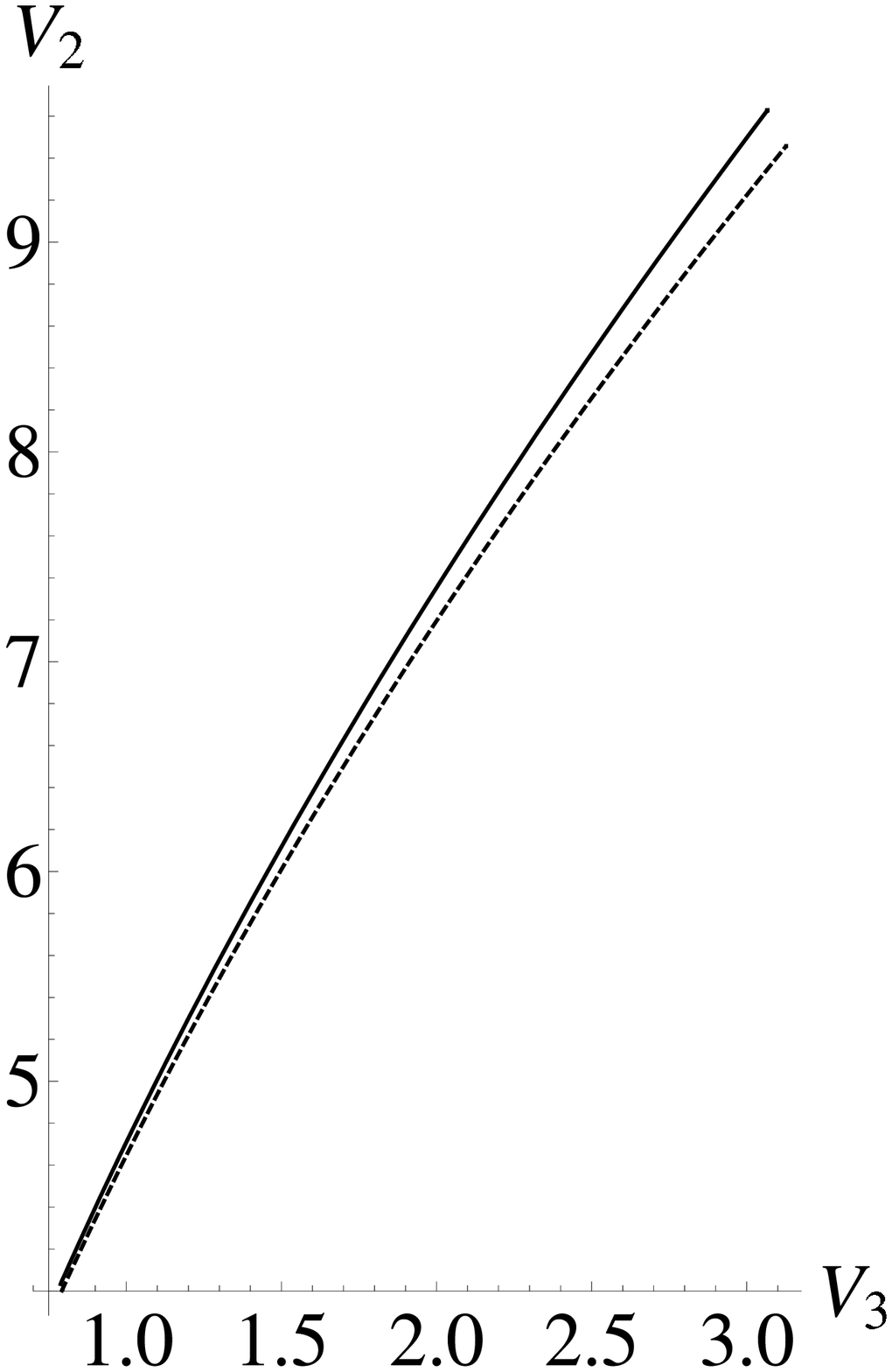}}
\subfigure[$0.2 < x<0.3$.]{
\includegraphics[scale=0.30]{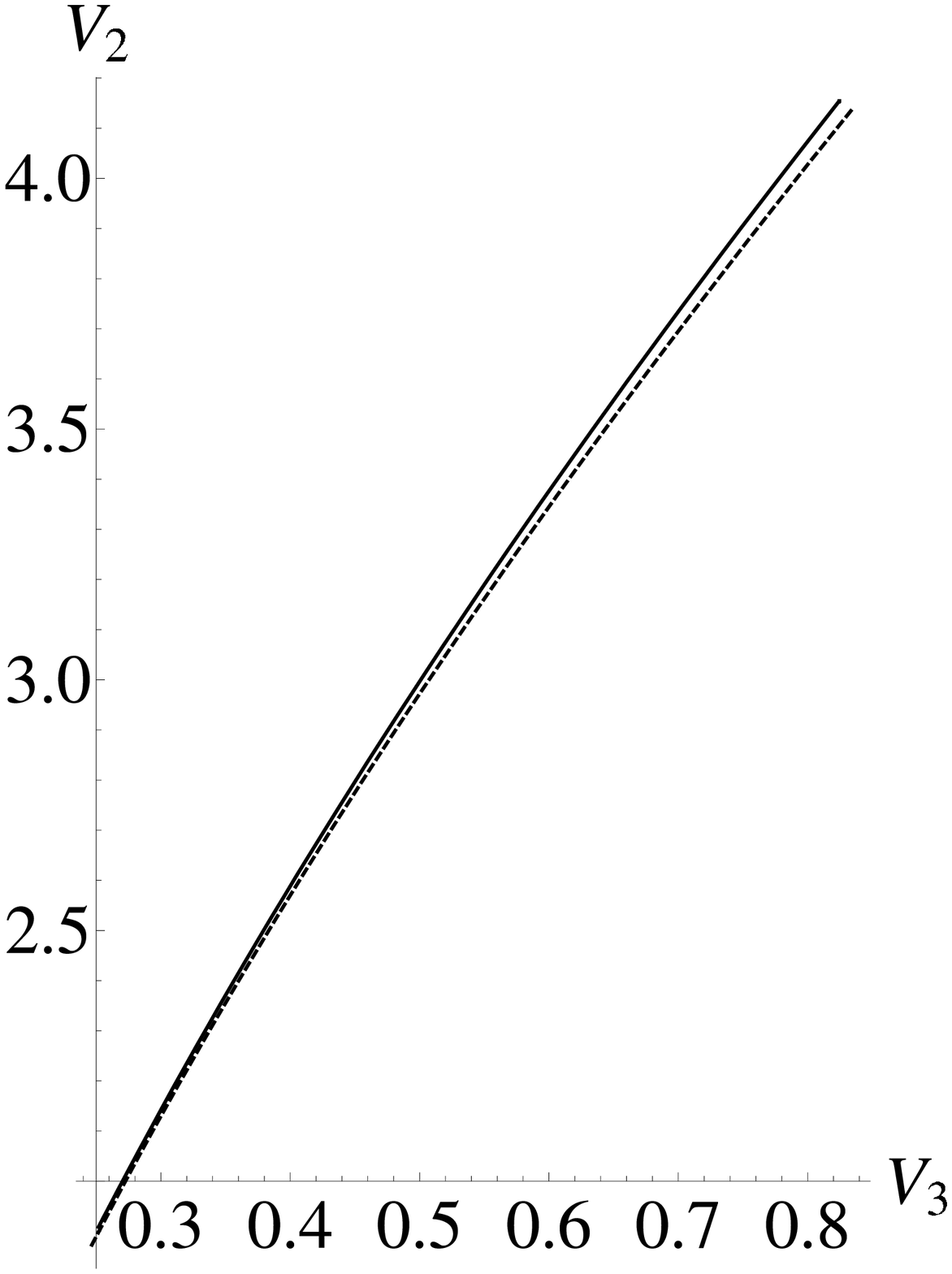}}
\caption{Comparison of the isoperimetric profiles $I_{S^2 \times \re}$ and $I_{S^3(3)}$.}
\label{fig:S2}
\end{center}
\end{figure}

\subsection{Proof that $I_{S^3 (3) \times \re }(t)\geq    {\frac{2\sqrt{\epsilon}}{12^{3/8}}} I_{S^4(\frac{ 12^{3/4}}{2\epsilon})}(t)$ for $t\leq 100$, $\epsilon =(1.047)^2$.} 
The situation is similar to the one in the previous subsection. $S^3 (3) \times \re$ is isometric to $(S^3 \times \re , 2(g_0^3 +dt^2 ) )$.  Then we consider the formulas
(1) and (2) and let $x=\eta$. Then if we call $V(x)=Vol(\Omega^3_{h(x)} )$ and $A(x)=Vol(\partial \Omega^3_{h(x)} )$ it follows 
that $I_{S^3 (3) \times \re } (4V(x) )= 2^{3/2} A(x)$ for small values of $x$. This holds until $x=x_0 \approx 1.9$ when $2^{3/2} A(x_0 ) =8\sqrt{2} \pi^2
=2Vol(S^3 (3))$. 
Let $v_0 =4V(x_0 ) $. Then
for $v\geq v_0 $
we have that $I_{S^3 (3) \times \re } (v)=8\sqrt{2} \pi^2$. The only problem to verify the inequality is for small values of the volumes. In this case
the problem becomes simpler because of the factor ${\frac{2\sqrt{\epsilon}}{12^{3/8}}} \approx 0.825 <1$.

By a direct computation we see that $4V(1)<15$ and $2^{3/2} A(1)>39$. It follows that $I_{S^3 (3) \times \re } (15)/15^{3/4} >39/15^{3/4} >5$.

But it was proved by V. Bayle \cite[Page 52]{Bayle} that the function $I_{S^3 (3) \times \re } (v)/v^{3/4}$ is decreasing. So for any $v<15$ we
have that $I_{S^3 (3) \times \re } (v) > 5 v^{3/4}$

Of course, $S^4(\frac{ 12^{3/4}}{2\epsilon}) = (S^4, 2^{3/2} 3^{1/4} \epsilon g_0^4 )$ and so the isoperimetric profile is given by 

$$I_{S^4(\frac{ 12^{3/4}}{2\epsilon})}\left(\epsilon^2  \frac{64 \pi^2 }{\sqrt{3}} (2+\cos(r))\sin^4(r/2) \right)=8 \ \ 2^{1/4}3^{3/8}  \pi^2 \epsilon^{3/2} \sin^3 (r).$$

Let us call $I_1 = I_{S^4(\frac{ 12^{3/4}}{2\epsilon})}$. Then one can trivially check that $\lim_{v\rightarrow 0} I_1 (v)/v^{3/4} = 2^{7/4} \sqrt{\pi} <6$. Since
the function  $I_1 (v)/v^{3/4}$ is decreasing by the Theorem of  Bayle  we have that 

$$ I_{S^4(\frac{ 12^{3/4}}{2\epsilon})} (v)  < 6v^{3/4}.$$

And so for $v<15$, $I_{S^3 (3) \times \re } (v) > 5v^{3/4}>(0.83) 6 v^{3/4}> {\frac{2\sqrt{\epsilon}}{12^{3/8}}} I_{S^4(\frac{ 12^{3/4}}{2\epsilon})} (v) $.

 For bigger values ($15\leq v \leq 100$), we simply check the inequality of the isoperimetric functions through a completely standard numerical argument.  We provide with the graphic for  $5 \leq t \leq 100$ (Figure \ref{fig:S3}).

\begin{figure}[h!tp]
\begin{center}
\includegraphics[scale=0.30]{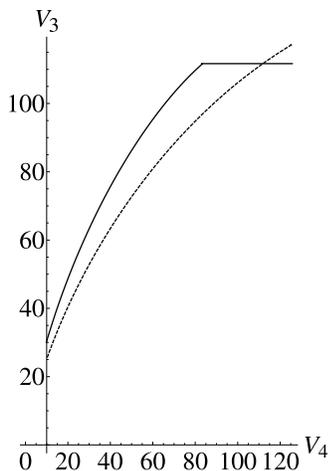}
\caption{\label{fig:S3}Comparison of the isoperimetric profiles $I_{S^3(3) \times \re}$ (continuous) and $\sqrt{\frac{\sqrt{2} \epsilon}{3^{3/4}} } I_{S^4(\frac{ 12^{3/4}}{2\epsilon})}$ (dashed).}
\end{center}
\end{figure}

\subsection{Proof of Theorem 1.1}

\begin{Lemma} $I_{(S^2 \times \re^2 ,g_0^2 +dt^2)} \geq I_{S^3 (3)\times \re}$ 
\end{Lemma}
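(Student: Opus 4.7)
My plan is to deduce the lemma from the three-dimensional comparison $I_{S^2 \times \re} \geq I_{S^3(3)}$ established in Subsection 2.1, via a ``taking product with $\re$'' monotonicity argument. The key observation is that if $P$ and $Q$ are Riemannian manifolds of the same dimension with $I_P \geq I_Q$ pointwise, then one expects $I_{P \times \re} \geq I_{Q \times \re}$ as well. Since $S^2 \times \re^2 = (S^2 \times \re) \times \re$, applying this principle with $P = S^2 \times \re$ and $Q = S^3(3)$ (both three-dimensional) would yield precisely the desired inequality.

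The mechanism is the Ros Product Theorem \cite{Ros}, which for a Riemannian manifold $W$ satisfying appropriate concavity hypotheses on its isoperimetric profile produces a lower bound
$$I_{W \times \re}(v) \;\geq\; \Phi(I_W)(v)$$
entirely in terms of $I_W$, where $\Phi$ is a functional monotone in its argument. For the right-hand side of the lemma, Pedrosa's explicit classification \cite{Pedro} of isoperimetric regions in $S^3(3) \times \re$ (either cylindrical slabs or the ball-type regions given by formulas (1) and (2) with $n=3$) shows that the Ros lower bound is saturated, so that $I_{S^3(3) \times \re} = \Phi(I_{S^3(3)})$. Applying Ros's estimate to $W = S^2 \times \re$, invoking monotonicity of $\Phi$, and feeding in the Subsection 2.1 inequality yields the chain
$$I_{S^2 \times \re^2}(v) \;\geq\; \Phi(I_{S^2 \times \re})(v) \;\geq\; \Phi(I_{S^3(3)})(v) \;=\; I_{S^3(3) \times \re}(v),$$
which is the claim.

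The main obstacle lies in verifying that the hypotheses of the Ros Product Theorem are satisfied by the three-dimensional factor $S^2 \times \re$, namely the concavity-type condition on an appropriate power of $I_{S^2 \times \re}$. This requires a careful analysis of Pedrosa's explicit parametric description of $I_{S^2 \times \re}$ through the family of ball-type regions $\Omega^2_h$ (formulas (3) and (4)) together with the constant piece of value $8\pi$ corresponding to cylindrical sections for large volumes. Once that regularity is confirmed, the monotonicity of $\Phi$ and the sharpness for $S^3(3) \times \re$ follow from the formulation of the theorem in \cite{Ros}, completing the proof.
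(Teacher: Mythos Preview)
Your high-level plan --- deduce the four-dimensional inequality from the three-dimensional one of Subsection~2.1 by ``taking a product with $\re$'' via the Ros Product Theorem --- is exactly what the paper does. But your mechanism is a detour, and the ``main obstacle'' you flag is not actually there.

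The paper applies the Ros Product Theorem in its \emph{symmetrization/comparison} form (see \cite[Proposition~3.6]{Ros} or \cite[Section~3]{Morgan2}): given an isoperimetric region $U\subset (S^2\times\re)\times\re$, one replaces each slice $U_t=U\cap((S^2\times\re)\times\{t\})$ by the geodesic ball $W_t\subset S^3(3)$ of the same volume, centered at a fixed pole. The resulting region $W\subset S^3(3)\times\re$ has $\Vol(W)=\Vol(U)$, and Ros's theorem gives $\Vol(\partial W)\leq\Vol(\partial U)$. The only hypotheses needed are that the \emph{target} $S^3(3)$ has nested isoperimetric regions (automatic for round spheres) and that $I_{S^2\times\re}\geq I_{S^3(3)}$. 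No concavity condition on $I_{S^2\times\re}$ is required at all; your proposed verification of a concavity hypothesis for $S^2\times\re$ is unnecessary. Likewise, there is no separate sharpness statement to prove for $S^3(3)\times\re$: the symmetrized competitor $W$ lives directly in $S^3(3)\times\re$, so the comparison $I_{(S^2\times\re)\times\re}\geq I_{S^3(3)\times\re}$ drops out immediately.

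There is also a domain issue your abstract $\Phi$ formulation does not address: $I_{S^3(3)}$ is only defined on $[0,\Vol(S^3(3))]$, while slices $U_t$ may have larger volume. The paper handles this overflow explicitly: when $\Vol(U_t)>\Vol(S^3(3))$ one sets $W_t=S^3(3)$, obtaining $\Vol(W)<\Vol(U)$; one then thickens the full-cylinder portion of $W$ to restore the volume without changing $\Vol(\partial W)$. Your functional $\Phi$ would need an analogous extension, which you have not supplied.
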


\begin{proof}
Consider an isoperimetric region $U \subset S^2 \times \re^2$. Consider any $t \in \re$ and let $U_t =
U\cap (S^2 \times \re \times \{ t \}) $. If $Vol(U_t ) \leq Vol ( S^3 (3) )$ we let $W_t$ be the geodesic sphere 
in $S^3 (3)$ around the south pole with volume $Vol(U_t )$. If $Vol(U_t ) > Vol (S^3 (3) )$ we let $W_t =
S^3 (3)$.

If $Vol(U_t ) \leq Vol S^3 (3)$ for all $t$ then
we consider the region $W\subset S^3 (3) \times \re$ such that $W\cap (S^3 \times \{ t \}) = W_t$. Then $Vol(W)=
Vol(U)$. But since $I(S^2\times \re ,g_2+dt^2) \geq I(S^3 (3))$ from Subsection 1 we can apply
Ros Product Theorem (see \cite[Proposition 3.6]{Ros} or \cite[Section 3]{Morgan2}) to see
that  $Vol (\partial W) \leq Vol(\partial U)$. 

In case $Vol(U_t ) > Vol( S^3 (3))$ for some $t$ then there is  some interval $(-a,a)$ where this happens,
and  $Vol(U_a ) = Vol( S^3 (3))$. In the same way as before we construct a region $W
\subset S^3 (3) \times \re$. As before $Vol(\partial W) \leq Vol(\partial U)$ by Subsection 1 and
Ros Product Theorem, but in this case $Vol(W)<Vol(U)$. But we can replace $W$ by a region
$\overline{W}$ which has a thicker cylindrical part (a  region $(-a,B)\times S^3(3)$ with $B>A$)
such that $Vol(\partial \overline{W} ) =Vol(\partial W )$
and $Vol( \overline{W}) = Vol(U)$.  

This proves the Lemma.

\end{proof}

\begin{Lemma} 
\label{MorganLemma}
The isoperimetric profile of $S^2 \times \re^2$, $I_{S^2 \times \re^2}(v)$, is bounded below by $\frac{4 \pi}{\sqrt{2}} \sqrt{v}$, for $v\geq 16$.
\end{Lemma}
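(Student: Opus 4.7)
My plan is to combine Lemma~2.1 (the preceding one) with a slicing argument to cover the entire range $v\geq 16$.

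First I would apply Lemma~2.1 to reduce the question to the isoperimetric profile of $S^3(3)\times\re$: $I_{(S^2\times\re^2,g_0^2+dt^2)}(v)\geq I_{S^3(3)\times\re}(v)$. Using the explicit description from Pedrosa's classification reviewed in subsection~2.2, $I_{S^3(3)\times\re}(v)$ is given by the parametric formulae $v=4V(x)$, $I=2^{3/2}A(x)$ for $x\leq x_0\approx 1.9$, and equals the cylindrical maximum $8\sqrt{2}\pi^2=2\Vol(S^3(3))$ for $v\geq v_0:=4V(x_0)$. One then verifies $I_{S^3(3)\times\re}(v)\geq\frac{4\pi}{\sqrt{2}}\sqrt{v}$ in two parts: for $v\in[16,v_0]$ by checking it on the explicit parametric curve (a routine numerical check in the spirit of subsection~2.2, using $2^{3/2}A(1)>39$ as a starting anchor), and for $v\in[v_0,16\pi^2]$ directly, where the inequality reduces to $\sqrt{v}\leq 4\pi$. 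Together these cover $16\leq v\leq 16\pi^2$.

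For $v\geq 16\pi^2$ the constant bound $8\sqrt{2}\pi^2$ from Lemma~2.1 falls below $\frac{4\pi}{\sqrt{2}}\sqrt{v}$, so I would instead use a slicing argument on $S^2\times\re^2=M\times\re$ with $M=S^2\times\re$. For a region $U$, set $w(t)=\Vol(U\cap(M\times\{t\}))$, decompose the unit normal to $\partial U$ as $\nu=(\nu_M,\nu_\re)$, and use the coarea identities $\int_{\partial U}|\nu_M|\,d\sigma=\int_\re P_M(U_t)\,dt$ and $\int_{\partial U}|\nu_\re|\,d\sigma\geq\int_\re|w'(t)|\,dt$ together with Minkowski's integral inequality applied to $1=\sqrt{|\nu_M|^2+|\nu_\re|^2}$ to obtain
\[
P(U)^2\;\geq\;\Bigl(\int_\re P_M(U_t)\,dt\Bigr)^2+\Bigl(\int_\re|w'(t)|\,dt\Bigr)^2.
\]
Applying $P_M(U_t)\geq I_M(w(t))$, the Pedrosa bound $I_{S^2\times\re}(w)=8\pi$ past the cylindrical threshold of $S^2\times\re$ at $w\approx 16.7$, and $\int|w'|\,dt\geq 2\max_t w(t)$, the problem reduces to minimizing $(8\pi T)^2+(2w_{\max})^2$ subject to $w_{\max}T\geq v$. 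The minimum is $32\pi v$, attained at $T=\sqrt{v/(4\pi)}$, $w_{\max}=\sqrt{4\pi v}$, so $P(U)\geq\sqrt{32\pi v}=4\sqrt{2\pi v}$; since $32\pi>8\pi^2$ (equivalently $4>\pi$) this strictly exceeds $\frac{4\pi}{\sqrt{2}}\sqrt{v}=2\sqrt{2}\pi\sqrt{v}$.

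The main obstacles are (i) confirming the intermediate numerical inequality $I_{S^3(3)\times\re}(v)\geq\frac{4\pi}{\sqrt{2}}\sqrt{v}$ on $[16,v_0]$ from the explicit parametric formulae, and (ii) in the large-$v$ regime, justifying that the optimizer's slice volumes $w(t)$ truly lie above the cylindrical threshold of $S^2\times\re$. The latter is automatic when $v\geq 16\pi^2$, because then the optimal $w_{\max}=\sqrt{4\pi v}\geq 8\pi^{3/2}>16.7$, and a short monotonicity comparison with a cylindrical $w$ of the same total $v$ rules out configurations where $w$ would dip below the threshold and weaken the bound. The Minkowski--coarea lower bound $P^2\geq P_v^2+P_h^2$ itself is standard, so once these checks are in place the remainder is a one-variable optimization.
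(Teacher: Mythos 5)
Your route is genuinely different from the paper's. The paper does not slice $S^2\times\re^2$ and does not use Lemma~2.1 here at all: it applies Morgan's product theorem (Theorem~2.1 of \cite{Morgan1}) directly to the two factors $(S^2,g_0^2)$ and $\re^2$, whose profiles $f_2(v)=\sqrt{v(4\pi-v)}$ and $f_1(v)=2\sqrt{\pi}\sqrt{v}$ are concave, obtaining $I_{S^2\times\re^2}(v)\geq \frac{1}{\sqrt{2}}\inf\{v_1f_2(v_2)+v_2f_1(v_1):\ v_1v_2=v\}$, and then shows by a one-line elementary estimate that the infimum is at least $4\pi\sqrt{v}$ once $v\geq 16$ (the factor $\frac{1}{\sqrt2}$ in the statement is exactly Morgan's loss factor). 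That covers all $v\geq 16$ uniformly, with no case split and no numerics. Your argument replaces this with (i) the $S^3(3)\times\re$ comparison plus a new, unverified numerical check on $[16,v_0]$, and (ii) a hand-rolled coarea--Minkowski slicing bound for $v\geq 16\pi^2$, which is in effect a special case of the projection argument underlying the Morgan/Ros product theorems; where it works it even yields the slightly better constant $\sqrt{32\pi}$ in place of $2\sqrt{2}\pi$.

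There is, however, a genuine gap in part (ii) as written. The reduction to ``minimize $(8\pi T)^2+(2w_{\max})^2$ subject to $w_{\max}T\geq v$'' silently assumes that $I_{S^2\times\re}(w(t))=8\pi$ for every $t$ in the support of $w$, i.e.\ that every nonempty slice has volume above the cylindrical threshold $w^*\approx 16.7$. Your justification --- that the optimizer of the relaxed problem has $w_{\max}=\sqrt{4\pi v}>w^*$ --- says nothing about the actual region $U$: its slice function may spend a long interval below $w^*$, where $I_{S^2\times\re}<8\pi$, and the promised ``monotonicity comparison with a cylindrical $w$'' is never formulated. The gap is fillable: by Bayle's concavity (used elsewhere in the paper) one has $I_{S^2\times\re}(w)\geq (8\pi/w^*)\,w$ for $w\leq w^*$; setting $v_2=\int_{\{w<w^*\}}w\,dt$ gives $\int I_{S^2\times\re}(w)\,dt\geq 8\pi(v-v_2)/w_{\max}+(8\pi/w^*)v_2$, which for $w_{\max}\geq w^*$ is minimized at $v_2=0$ (recovering your optimization), while for $w_{\max}<w^*$ it yields $P(U)\geq (8\pi/w^*)v\approx 1.5\,v$, comfortably above $2\sqrt{2}\pi\sqrt{v}$ when $v\geq 16\pi^2$. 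Until something of this kind is written down, part (ii) is not a proof; and the numerical inequality in part (i) on $[16,v_0]$ is likewise only asserted, though it is of the same character as the checks the paper performs elsewhere.
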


\begin{proof}
Consider the isoperimetric profile of  $\re^2$, $f_1(v)=2\sqrt{\pi} \sqrt{v}$, and the isoperimetric profile of $S^2$, $f_2(v)=\sqrt{v(4 \pi-v)}$ ($f_2$ is defined on $[0,4\pi]$). Let

$$I_P(v)= \inf \{v_1 f_2(v_2) + v_2 f_1(v_1) | v_1v_2 = v \}$$

\noindent be the lower bound on the isoperimetric profile of $S^2 \times \re^2$ for regions which are products. Then, since $f_1$ and $f_2$ are concave, it follows by Theorem 2.1 in \cite{Morgan1} that

\begin{equation}
\label{Mor}
I_{S^2 \times \re^2}(v)\geq \frac{1}{\sqrt{2}} I_P(v).
\end{equation}

To compute $I_P (v)$, let $x=v_2 \in (0,4\pi )$ and  $v_1 =v/x$. Consider

$$f_v (x)=(v/x)\sqrt{x(4\pi -x)} + 2\sqrt{\pi} x \sqrt{v/x} = (v/\sqrt{x}) \sqrt{4\pi -x}  +2\sqrt{\pi} \sqrt{x} \sqrt{v} .$$

One can find the infimum of $f_v$ explicitly, but it is a little cumbersome. For our purposes it is enough
to consider the case $v\geq 16$. Then

$$f_v  (x) \geq (4  \sqrt{(4\pi /x) -1} +2 \sqrt{\pi } \sqrt{x} )\sqrt{v}.$$

But it is easy to check that for $x\in (0,4 \pi )$, $4  \sqrt{(4\pi /x) -1} +2 \sqrt{\pi } \sqrt{x} \geq 4 \pi $. Then
$I_P (v) \geq 4 \pi \sqrt{v}$ for $v\geq 16$ and the lemma follows.

\end{proof}


We are now ready to give the proof of Theorem 1.1:



\begin{proof}
It follows from Subsection 2.2 and Lemma 2.1 that the theorem holds for $v\leq 100$. 
On the other hand, since the Ricci curvature of $S^2 \times \re^2$ is non-negative, it follows from Corollary 2.2.8 in \cite{Bayle}, page 52, that the isoperimetric profile  of $S^2 \times \re^2$ is concave. In turn, this concavity of $I_{S^2 \times \re^2}(v)$ implies that $I_{S^2 \times \re^2}(v)$ is also bounded from below by $l(v)$ (for $v_1\leq v \leq v_2$,  $v_2\geq 16)$, where $l(v)$ is the straight line joining the two points $\left(v_1,I_{S^3(3) \times \re}(v_1)\right)$ and $\left(v_2,\frac{4 \pi}{\sqrt{2}} \sqrt{v_2}\right)$, of the graphs of $ I_{S^3(3) \times \re}(v)$ and  $\frac{4 \pi}{\sqrt{2}} \sqrt{v}$. In particular by choosing $v_1=83.5$ and $v_2=450$, we get the line $l(v)=0.209642 (v-83.5)+(8 \sqrt{2} \pi^2)$, as a lower bound for $I_{S^2 \times \re^2}(v)$, for $83.5 \leq v \leq 450$. Again, using standard numerical computations, we show that this line $l(v)$, in turn, bounds  $\sqrt{\frac{\sqrt{2} \epsilon}{3^{3/4}} }I_{S^4(\frac{ 12^{3/4}}{2\epsilon})}(v)$ from above, for $v \geq 83.5$, Figure \ref{fig:line}. We provide the graphics.

\begin{figure}[htp]
\subfigure[The line $l(v)$ (dashed),
 joining  $I_{S^3(3)\times \re}\left(v \right)$ and 
  $\frac{4 \pi}{\sqrt{2}} \sqrt{v}$,  is a lower bound 
 for   $I_{S^2 \times \re^2}(v)$,
 for $83.5\leq v \leq 450$.]{
\includegraphics[scale=0.30]{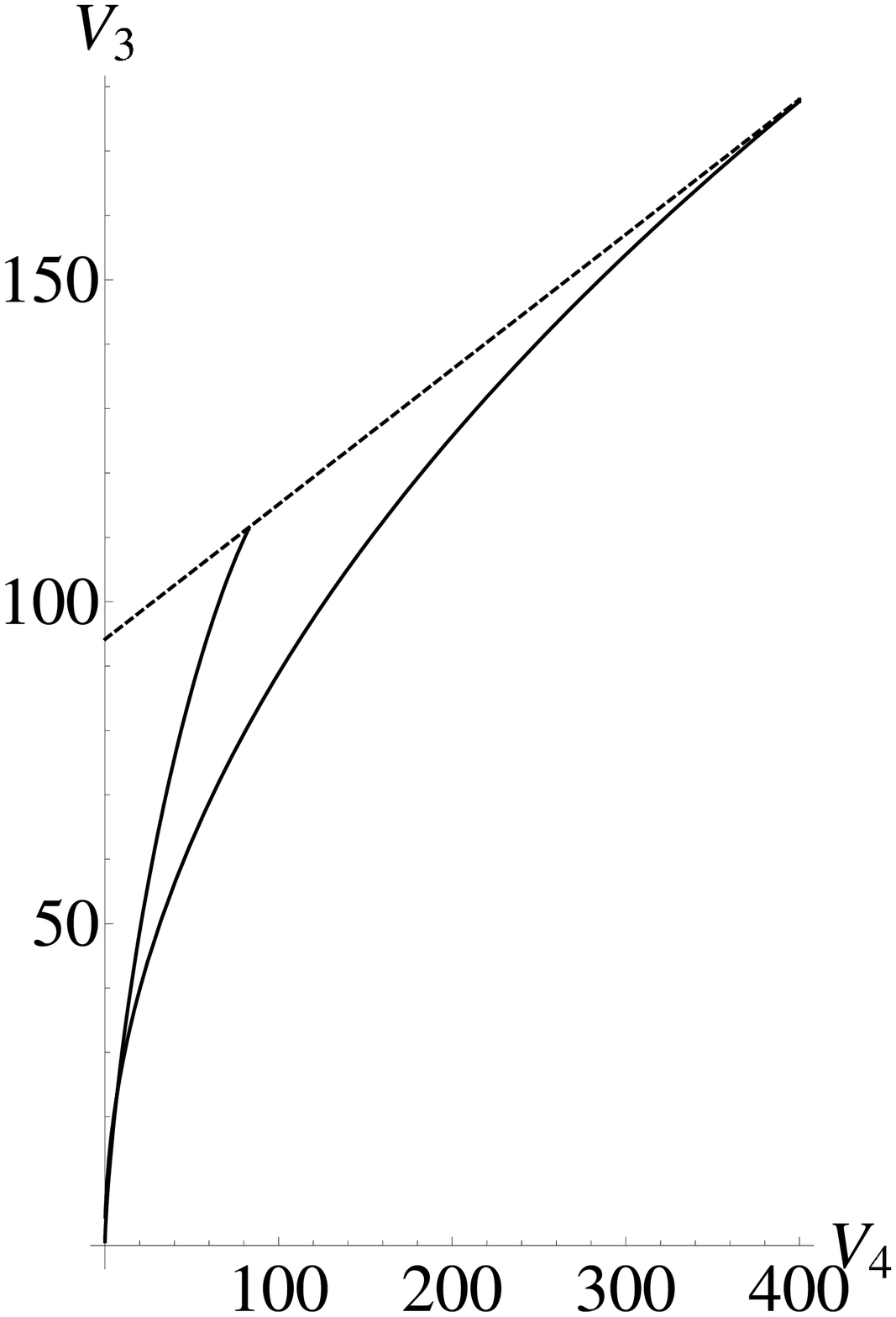}}
\hspace{0.3in}%
\subfigure[The line $l(v)$ (dashed) is an upper bound 
for $\sqrt{\frac{\sqrt{2} \epsilon}{3^{3/4}} }I_{S^4(\frac{ 12^{3/4}}{2\epsilon})}(v)$ (continuous).]{
\includegraphics[scale=0.30]{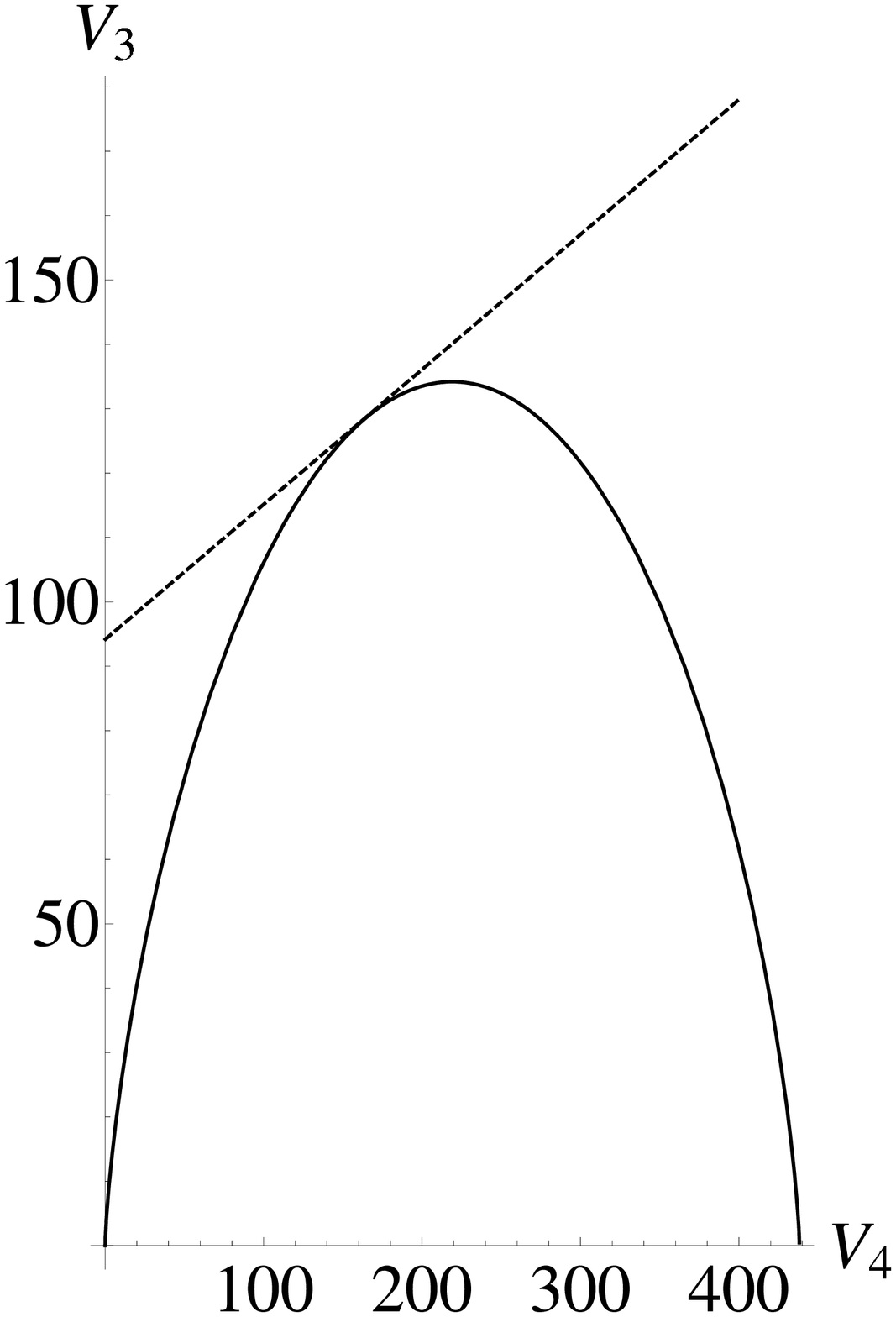}}

\caption{\label{fig:line} The line $l(v)$ is a lower bound for $I_{S^2 \times \re^2}$, and an upper bound for $\sqrt{\frac{\sqrt{2} \epsilon}{3^{3/4}} }I_{S^4(\frac{ 12^{3/4}}{2\epsilon})}(v)$, for $83.5\leq v \leq 450$.}
\end{figure}

\end{proof}


\section{Proof of Theorem 1.2}

\begin{proof}
Let $f: S^2 \times \re^2 \rightarrow \re_{\geq 0}$ be any smooth compactly supported
function.  Recall that we denote
by $g^n_0$ the metric on $S^n$ of constant sectional curvature one and by $g^n_0 (k)$ the
round metric on $S^n$ of scalar curvature $k$. So $g^n_0 (k) = n(n-1)/k \ g^n_0$. By
$S^n (k)$ we mean the Riemannian manifold $(S^n ,g^n_0 (k))$.

In case $Vol(\{ f>0 \} )\leq Vol(S^4 (12^{3/4}/(2\epsilon )) ) $ we let $f_* : S^4 (12^{3/4}/(2\epsilon ) )  \rightarrow
\re_{\geq 0}$ be the spherical symmetrization of $f$: $f_*$ is a radial (with
respect to the axis through some fixed point $S$), non-increasing function on
the sphere such that for any $t>0$, $Vol(\{ f>t \} )=Vol(\{ f_* >t \} )$. Then for any $q>0$, 
$||f||_q = ||f_* ||_q$.

Now, by the coarea formula 

$$ \int { \| \nabla f \| }^2  dvol(g^2_0 +dt^2 )  = \int_0^{\infty} \left( \int_{f^{-1} (t)} \| \nabla f \| d\sigma_t  \right) dt ,$$

\noindent
where $d\sigma_t$ denotes the volume element of the induced metric on $f^{-1} (t) $.
By H\"{o}lder's inequality

$$\int_0^{\infty} \left( \int_{f^{-1} (t)} \| \nabla f \| d\sigma_t  \right) dt  \geq \int_0^{\infty} (Vol(f^{-1} (t) ))^2 
{\left( \int_{f^{-1} (t) } {\| \nabla  f \|}^{-1} d\sigma_t \right) }^{-1} dt . $$

But 

$$ \int_{f^{-1} (t) } {\| \nabla  f \|}^{-1} d\sigma_t  = -\frac{d}{dt}(\{ f>t \} ) =-\frac{d}{dt}(Vol(\{ f_* >t \} ))=
 \int_{f_*^{-1} (t) } {\| \nabla  f_*  \|}^{-1} d\sigma_t .$$

Now $f^{-1} (t) $ contains  the boundary of $\{ f>t \} $ and $Vol( \{ f>t \} ) =Vol(\{ f_* >t \} )$ (which is an isoperimetric
region in the sphere). Then Theorem 1.1 tells us that $ Vol(f^{-1} (t) ) \geq 
Vol(\partial (\{ f>t \} ) \geq   \frac{2\sqrt{\epsilon}}{12^{3/8}}   Vol( f_*^{-1} (t) )$, and so

$$\int \| \nabla f \|^2  dvol(g^2_0 +dt^2 ) \geq  {\left( \frac{2\sqrt{\epsilon}}{12^{3/8}} \right)}^2  \int_0^{\infty} (Vol(f_*^{-1} (t) ))^2 
{\left( \int_{f_*^{-1} (t) } {\| \nabla  f_* \|}^{-1} d\sigma_t \right) }^{-1} dt $$

$$= (\sqrt{2}\epsilon / 3^{3/4} ) \int_0^{\infty} Vol(f_*^{-1} (t)) \| \nabla f_* \| dt =  (\sqrt{2}\epsilon / 3^{3/4}) \int_0^{\infty} \left( \int_{f_*^{-1} (t)} \| \nabla f_*  \| d\sigma_t  \right) dt $$

$$= (\sqrt{2} \epsilon / 3^{3/4})  \int \| \nabla f_*  \|^2 dvol(g^4_0 ( 12^{3/4}/(2\epsilon ) ) )$$

\noindent
(we are using that $\| \nabla f_*  \|$ is constant along level surfaces of $f_*$, since it is a radial function).
It follows that

$$Y_{g^2_0 + dt^2} (f) = \frac{6 \int_{S^2 \times \re^2} {\| \nabla f \|}^2\  dvol(g^2_0 +dt^2 ) + \int_{S^2 \times \re^2}  2 f^2 \ dvol(g^2_0 +dt^2 ) }{ (\int_{S^2 \times \re^2}
 f^4 \ dvol(g^2_0 +dt^2))^{1/2} }$$

$$\geq \frac{6 (\sqrt{2}\epsilon / 3^{3/4}) \int_{S^4} {\| \nabla f _* \| }^2  \ dvol(g^4_0 (12^{3/4}/(2\epsilon ) ) ) + \int_{S^4} 2 f_*^2 \ dvol(g^4_0 (12^{3/4}/(2\epsilon ))) }{ (\int_{S^4} f_*^4 \ dvol(g^4_0(12^{3/4}/(2\epsilon ))))^{1/2} } $$

$$\geq \frac{\sqrt{2} \epsilon}{3^{3/4}} \ \  \frac{6 \int_{S^4} {\| \nabla f _* \| }^2 \  dvol(g^4_0 ( 12^{3/4}/(2\epsilon )  )) + \int_{S^4}   (12^{3/4} /(2\epsilon ))     f_*^2 \ 
 dvol(g^4_0 ( 12^{3/4}/(2\epsilon ) )) }{ (\int_{S^4}
 f_*^4 \ dvol(g^4_0 ( 12^{3/4}/(2\epsilon )  )))^{1/2} } $$

$$= \frac{\sqrt{2} \epsilon }{3^{3/4}} Y_{g^4_0 ( 12^{3/4} /(2\epsilon ))} (f_* ) .$$

\vspace{.5cm}

Now if $Vol(\{ f>0 \} ) >  Vol(S^4 (  12^{3/4} /(2\epsilon )))$ there are values $t_0 = \max (f) \geq t_1
\geq t_2 \geq ...\geq t_N =0$ such that $Vol (f^{-1} (t_{i} ,t_{i-1} ))= Vol(S^4 (   12^{3/4}/(2\epsilon )))$,
for $i=1,..,N-1$ and $Vol(f^{-1}(0,t_{N-1})) \leq  Vol(S^4 ( 12^{3/4}/(2\epsilon ) ))$.  Let $f_i$
be the restriction of $f$ to $f^{-1} (t_{i} ,t_{i-1}  )$, for $i=1,2,..,N$, and let ${f_i}_* $ be the spherical
symmetrization of $f_{i}$. Therefore   ${f_i}_*  : S^4 (    12^{3/4}/(2\epsilon )) \rightarrow 
[t_{i} , t_{i-1} ]$ is radial (with respect to some chosen point $S$, the south pole),
non-increasing and $Vol(\{ f_i >t \} )= Vol( \{  {f_i}_* >t \} )$ (for all $t>0$). Note that
according to our notations for $t\in  [t_{i} , t_{i-1} ]$, we have  

$$Vol(\{ { f_i }_* >t \}  ) = Vol(\{ f_i >t \}  ) = Vol (f^{-1} (t,t_i )) \leq
Vol(\{ f> t \} ) .$$

By a result of V. Bayle \cite[Page 52]{Bayle} the isoperimetric profile
$I_{S^2 \times \re^2}$ is concave and therefore increasing. Then it follows from Theorem 1.1   that for any $t>0$ we have that
$Vol({f_i } ^{-1} (t) ) \geq   \frac{2\sqrt{\epsilon}}{12^{3/8}}Vol({{f _i}_*}^{-1} (t))$ . As before we apply
the coarea formula to obtain

$$ \int_{ f^{-1} (t_{i} ,t_{i-1}  ) } {\| \nabla f_i \| }^2 dvol(g^2_0 +dt^2 ) \geq {\left( \frac{2\sqrt{\epsilon}}{12^{3/8}}
\right) }^2  \int_{S^4}  {\| \nabla {f_i}_* \| }^2 dvol(g^4_0 (12^{3/4}/(2\epsilon ))) .$$

\noindent 
Therefore we  have that for any $q>0$, 

$$|| f ||_q^q = \Sigma_{i=1}^{N} || f _i ||_q^q = \Sigma_{i=1}^{N} || {f _i}_* ||_q^q ,$$

\noindent
and

$$ \int_{S^2 \times \re^2} {\| \nabla f \| }^2  dvol(g^2_0 +dt^2 )\geq     {\left( \frac{2\sqrt{\epsilon}}{12^{3/8}}
\right) }^2  \   \Sigma_{i=1}^{N} \int_{S^4}  {\| \nabla {f_i}_* \| }^2 
dvol(g^4_0 (12^{3/4} /(2\epsilon ))) .$$

Then 

$$Y_{g^2_0 + dt^2} (f) =  \frac{ \int_{S^2 \times \re^2} 6   {\| \nabla f \| }^2 +2 f^2  \ dvol (g^2_0 +dt^2 )}{ (\int_{S^2 \times \re^2} f^4 \   dvol(g^2_0 +dt^2) \ )^{1/2} } $$

$$\geq  \frac{  \Sigma_{i=1}^{N}     {\left( \frac{2\sqrt{\epsilon}}{12^{3/8}}
\right) }^2  \int_{S^4} 6  {\| \nabla {f_i}_* \| }^2  + 2 {f_i}_*^2  \  dvol (g^4_0 (  12^{3/4}/(2\epsilon ) ) )}{ (\Sigma_{i=1}^{N} \int {f_i}_*^4)^{1/2} }  $$

$$=  {\left( \frac{2\sqrt{\epsilon}}{12^{3/8}}
\right)}^2 \ \ 
\frac{  \Sigma_{i=1}^{N}    \left(  \int_{S^4} 6  {\| \nabla {f_i}_* \| }^2  \ + \  ( 12^{3/4}/(2\epsilon )     ) \  {f_i}_*^2 \ \  dvol(g_0 (12^{3/4}/(2\epsilon)) ) \  \right) }{( \Sigma_{i=1}^{N}   \int{ f_i}_*^4)^{1/2} }  $$

\noindent
and since for any $i$

$$ \int_{S^4}  6 {\| \nabla {f_i}_* \| }^2  +     (12^{3/4}/(2\epsilon ))     {f_i}_*^2 \ dvol(g_0 ( 12^{3/4}/(2\epsilon )    ) ) \geq Y_4  {\left( \int_{S^4}  {f_i}_*^4 dvol(g^4_0 ( 12^{3/4} /(2\epsilon ) ) \right) }^{1/2} ,$$

\noindent
we have

$$Y_{g^2_0 + dt^2} (f)  \geq          \frac{\sqrt{2} \epsilon }{3^{3/4}}        \ Y_4 \ \frac{  \Sigma_{i=1}^{N}  ( \int_{S^4}  {{f_i}_* }^4 )^{1/2} }{(\Sigma_{i=1}^{N}   \int _{S^4} {f_i}_*^4 )^{1/2} }  \geq 
\frac{\sqrt{2} \epsilon }{3^{3/4}}             Y_4
.$$

And therefore

$$Y(S^2 \times \re^2 , [g^2_0 +dt^2 ])  = \inf_{f} Y_{g^2_0 + dt^2} (f) \geq       \frac{\sqrt{2} \epsilon }{3^{3/4}}        \  Y_4 .$$

\end{proof}

\vspace{0.5cm}

\end{document}